\theoremstyle{plain}
\newtheorem{thm}{Theorem}[section]
\newtheorem{lem}[thm]{Lemma}
\def\N{\mathbf {N}} \def\Z{\mathbf {Z}} \def\Q{\mathbf {Q}}
\def\R{\mathbf {R}} 
 \def\k{\textbf {k}}  
\def\z{\zeta}  
\def\zs{{\zeta}^{\star}}
\def\ZZ{\mathcal {Z}}
\font\fivecy=wncyr5 \def\sa{\hbox{\fivecy X}}
\def\zf{\zeta_{\mathcal{F}}} 
\def\zfsa{{\zeta}^{\star , \ast}_{\mathcal{F}}}
\begin{document}
\title[Finite Real Multiple Zeta Values]{A Note on Finite Real Multiple Zeta Values}
\author{Hideki Murahara}
%\date{2015.10.7}
\address{Graduate School of Mathematics, Kyushu University \\
744 Motooka Fukuoka-city, Fukuoka, 819-0395 Japan,}
\email{h-murahara@math.kyushu-u.ac.jp} 

\keywords{Finite real multiple zeta value, symmetric formula, sum formula, duality theorem}
\subjclass[2010]{Primary 11M32; Secondary 05A19}
\renewcommand{\subjclassname}{\textup{2000} Mathematics Subject Classification}

\begin{abstract}
We prove three theorems on finite real multiple zeta values: the symmetric formula, the sum formula and the height-one duality theorem.  
These  are analogues of their counterparts on finite multiple zeta values.  
\end{abstract}
\maketitle

\section{Main theorems}
For positive integers $k_1, k_2, \ldots , k_{n}$ with $k_1 \geq 2$, the multiple zeta value and the multiple zeta star value (MZV and MZSV, for short) are defined by 
\begin{align*}
\z (k_1, k_2, \ldots, k_n) := \sum_{m_1>m_2>\cdots >m_n \geq 1} \frac {1}{m_1^{k_1}m_2^{k_2}\cdots m_n^{k_n}}, \\
\zs (k_1, k_2, \ldots, k_n) := \sum_{m_1 \geq m_2 \geq \cdots \geq m_n \geq 1} \frac {1}{m_1^{k_1}m_2^{k_2}\cdots m_n^{k_n}}.
\end{align*}

The finite real multiple zeta values (or symmetric multiple zeta values),  which were first introduced by Kaneko and Zagier \cite{kaneko_zagier-2014}, are defined for any positive integers $k_1, k_2, \ldots , k_{n}$ as follows:  
\begin{align*}
\zf ^{\ast} (k_1, k_2, \ldots, k_n) &:= \sum_{i=0}^{n}(-1)^{k_1+k_2+\cdots +k_i}\z^{\ast}(k_i, k_{i-1}, \ldots , k_1) \z ^{\ast}(k_{i+1}, k_{i+2}, \ldots , k_n), \\
\zf ^{\sa} (k_1, k_2, \ldots, k_n) &:= \sum_{i=0}^{n}(-1)^{k_1+k_2+\cdots +k_i}\z^{\sa}(k_i, k_{i-1}, \ldots , k_1) \z ^{\sa}(k_{i+1}, k_{i+2}, \ldots , k_n). 
\end{align*}
Here, the symbols $\z ^{\ast}$ and $\z ^{\sa}$ on the right-hand sides stand for the regularized values coming from harmonic and shuffle regularizations respectively,
i.e.,  real values obtained by taking constant terms of harmonic and shuffle regularizations as explained in   \cite{ihara_kaneko_zagier-2006}. 
In the sums, we understand $\zeta^{\ast}(\emptyset )=\zeta^{\sa}(\emptyset)=1$.

Let $\ZZ$ be the $\Q$-vector subspace of $\R$ spaned by the MZVs. It is known that this is a $\Q$-algebra.
In \cite{kaneko_zagier-2014}, Kaneko and Zagier proved that the difference $\zf^{\ast} (k_1, k_2, \ldots , k_n) -\zf^{\sa} (k_1, k_2, \ldots , k_n)$
is in the principal ideal of $\ZZ$ generated by $\zeta(2)$ (or $\pi^2$), in other words, that the congruence   
\[  \zf^{\ast} (k_1, k_2, \ldots , k_n) \equiv \zf^{\sa} (k_1, k_2, \ldots , k_n)  \pmod{\zeta (2)} \]
holds in $\ZZ$. 
They then defined the finite real multiple zeta value (FRMZV)  $\zf (k_1, k_2, \ldots , k_n)$ as an element 
in the quotient ring $\ZZ/\zeta (2)$ by 
\[  \zf (k_1, k_2, \ldots , k_n) := \zf^{\ast} (k_1, k_2, \ldots , k_n) \bmod \zeta(2). \]
We also refer to the values $\zf^{\ast} (k_1, k_2, \ldots , k_n)$ and $\zf^{\sa} (k_1, k_2, \ldots , k_n)$ as
(harmonic and shuffle versions of) finite real multiple zeta values.\\

In this paper, we prove the following theorems: 
\begin{thm}[Symmetric formula] \label{Symmetric formula}
Let $(k_1, k_2, \ldots , k_n)$ be any index set $(k_i \in \N)$ and let $S_n$ be the symmetric group of degree $n$.
Then, we have
\begin{align*}
\sum_{\sigma \in S_n} \zf (k_{\sigma(1)}, k_{\sigma(2)}, \ldots , k_{\sigma(n)}) = 0    \quad (\text{in }\ZZ/\zeta (2)).
\end{align*}
\end{thm}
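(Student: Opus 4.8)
The plan is to work with the harmonic version, since $\zf = \zf^{\ast}\bmod \z(2)$, and to prove that the real number
\[
F:=\sum_{\sigma\in S_n}\zf^{\ast}(k_{\sigma(1)},\ldots,k_{\sigma(n)})
\]
lies in the principal ideal $\z(2)\ZZ$. First I would insert the defining formula for $\zf^{\ast}$ and interchange the summation over $\sigma\in S_n$ with the inner summation over the cut point $i$. Grouping the permutations according to the unordered set $A=\{\sigma(1),\ldots,\sigma(i)\}\subseteq\{1,\ldots,n\}$, and noting that reversing a tuple is immaterial once one sums over all of its orderings, this rewrites $F$ as a sum over subsets,
\[
F=\sum_{A\subseteq\{1,\ldots,n\}}(-1)^{\sum_{j\in A}k_j}\,G_A\,G_{A^{c}},
\]
where $G_A$ is the sum of the harmonic-regularized value $\z^{\ast}$ over all orderings of the entries $(k_j)_{j\in A}$, and $A^{c}$ is the complement.

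Next I would bring in the classical symmetric-sum formula for $G_A$. Because taking the constant term of the harmonic (stuffle) regularization is an algebra homomorphism \cite{ihara_kaneko_zagier-2006}, the product $\prod_{j\in A}\z^{\ast}(k_j)$ expands over ordered set partitions, and M\"obius inversion on the partition lattice yields
\[
G_A=\sum_{\pi\vdash A}(-1)^{|A|-\ell(\pi)}\prod_{B\in\pi}(|B|-1)!\;\z^{\ast}\!\Big(\textstyle\sum_{j\in B}k_j\Big),
\]
the sum running over set partitions $\pi$ of $A$ with $\ell(\pi)$ blocks. Substituting this and the analogue for $G_{A^{c}}$ turns $F$ into a sum over triples $(A,\pi_A,\pi_{A^{c}})$, which is the same datum as a set partition $\pi$ of $\{1,\ldots,n\}$ equipped with a $2$-colouring of its blocks, the red blocks being exactly those that make up $A$.

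The key simplification comes from collecting, for each fixed $\pi=\{B_1,\ldots,B_\ell\}$, the contributions of all colourings. The two sign sources combine into $(-1)^{n-\ell(\pi)}$, which is colouring-independent, while the remaining sign $(-1)^{\sum_{j\in A}k_j}=\prod_{\text{red }B}(-1)^{k_B}$ (with $k_B:=\sum_{j\in B}k_j$) factorizes over blocks; summing each block independently over the two colours produces the factor $\prod_{B\in\pi}\big(1+(-1)^{k_B}\big)$. Hence
\[
F=\sum_{\pi\vdash\{1,\ldots,n\}}(-1)^{n-\ell(\pi)}\prod_{B\in\pi}(|B|-1)!\,\big(1+(-1)^{k_B}\big)\,\z^{\ast}(k_B).
\]
Since $1+(-1)^{k_B}$ vanishes unless $k_B$ is even, only partitions all of whose block-sums are even survive; for such a partition each factor $\z^{\ast}(k_B)=\z(k_B)$ is an even zeta value with $k_B\ge 2$, hence an element of $\z(2)\ZZ$. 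As every surviving term therefore lies in $\z(2)\ZZ$, we conclude $F\in\z(2)\ZZ$, i.e.\ $\sum_{\sigma\in S_n}\zf(k_{\sigma(1)},\ldots,k_{\sigma(n)})=0$ in $\ZZ/\z(2)$.

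The main obstacle is the bookkeeping in the reorganization step: one must verify that the block-colourings of $\pi$ account for each triple $(A,\pi_A,\pi_{A^{c}})$ exactly once, that the sign $(-1)^{n-\ell(\pi)}$ and the product of factorials are genuinely colouring-independent (this uses $|A|+|A^{c}|=n$ and $\ell(\pi_A)+\ell(\pi_{A^{c}})=\ell(\pi)$), and that the colouring sum factorizes blockwise. I would also take care to justify that the symmetric-sum formula holds for the regularized values $\z^{\ast}$ and not merely for convergent MZVs, which is legitimate precisely because the constant-term map of the harmonic regularization is an algebra homomorphism, so the stuffle-product derivation transfers verbatim.
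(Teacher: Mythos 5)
Your proof is correct, and it reaches the theorem by a genuinely different, more hands-on route than the paper. The paper never expands the Kaneko--Zagier definition: it observes that the $\zf^{\ast}$'s themselves satisfy the harmonic product rule, and that Hoffman's symmetric-sum theorem \cite[Theorem 4.1]{hoffman-2008} is proved using only that rule, so it applies verbatim in the FRMZV setting; the symmetric sum is then a polynomial in the depth-one values $\zf^{\ast}(k)=(1+(-1)^k)\z(k)$, each of which is $0$ or a multiple of $\z(2)$. You instead expand the definition, group permutations by the cut set $A$ (using that reversal merely permutes the orderings of $A$), apply Hoffman's classical formula to the $\z^{\ast}$-sums $G_A$ and $G_{A^c}$, and recombine the triples $(A,\pi_A,\pi_{A^c})$ into block-colored partitions, the coloring sum producing the factors $1+(-1)^{k_B}$. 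Your final identity is exactly the one the paper's argument yields, since $\prod_{B\in\pi}\bigl(1+(-1)^{k_B}\bigr)\z(k_B)=\prod_{B\in\pi}\zf^{\ast}(k_B)$; in effect your combinatorial reorganization re-proves, by hand and in this special case, the transfer of Hoffman's theorem to the FRMZV algebra. What the paper's route buys is brevity, at the cost of asking the reader to check that Hoffman's proof carries over to a new algebra; what yours buys is explicitness, and that Hoffman's theorem is invoked only for the regularized $\z^{\ast}$'s. Both routes ultimately rest on the same justification, namely that constant-term harmonic regularization (evaluation at $T=0$ of the regularization polynomial) respects the harmonic product, a point you correctly flag as the place needing care when some of the arguments equal $1$.
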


\begin{thm}[Sum formula] \label{Sum formula}
Let $(k_1, k_2, \ldots , k_n)$ be any index set $(k_i \in \N)$. 
For positive integers $k, n$ and $i$ with $1 \leq i \leq n \leq k-1$, we have
\begin{align*}
\sum_{ \substack{ k_1 + k_2 + \cdots + k_n = k \\ k_i \geq 2}} \zf^{\ast} (k_1, k_2, \ldots , k_n) \equiv (-1)^{i-1}\biggl( \binom{k-1}{i-1}+(-1)^{n}\binom{k-1}{n-i} \biggr) \z(k),
\end{align*}
where the congruences are $\bmod\, \zeta(2)$ in the $\Q$-algebra $\ZZ$.
\end{thm}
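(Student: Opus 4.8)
The plan is to deduce the formula from the already-established symmetric formula (Theorem~\ref{Symmetric formula}), which collapses the computation to a single residual sum. Write $\Sigma^{(i)}$ for the left-hand side. First I observe that, modulo $\z(2)$, the \emph{unconstrained} sum of $\zf^{\ast}$ vanishes: for a fixed multiset of values whose stabiliser in $S_n$ has order $s$, Theorem~\ref{Symmetric formula} gives $\sum_{\sigma\in S_n}\zf(k_{\sigma(1)},\ldots,k_{\sigma(n)})=0$, and since each distinct rearrangement occurs exactly $s$ times in that sum, the $\zf$-sum over the distinct rearrangements is already $0$; summing over all multisets yields $\sum_{k_1+\cdots+k_n=k,\ k_j\ge1}\zf^{\ast}(k_1,\ldots,k_n)\equiv0\pmod{\z(2)}$. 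Subtracting the terms with $k_i=1$ then gives
\[
\Sigma^{(i)}=\sum_{\substack{k_1+\cdots+k_n=k\\ k_i\geq 2}}\zf^{\ast}(k_1,\ldots,k_n)\equiv-\sum_{\substack{k_1+\cdots+k_n=k\\ k_i=1}}\zf^{\ast}(k_1,\ldots,k_n)\pmod{\z(2)},
\]
so everything is reduced to evaluating the residual in which the $i$-th entry is forced to be $1$.

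To evaluate the residual I would peel off the forced $1$ using the harmonic (stuffle) relations satisfied by $\zf^{\ast}$. Since $\zf^{\ast}(1)=\z^{\ast}(1)-\z^{\ast}(1)=0$, expanding $0=\zf^{\ast}(1)\,\zf^{\ast}(k_1,\ldots,k_{n-1})$ by the stuffle product produces a Hoffman-type identity: the sum of $\zf^{\ast}$ over all insertions of a $1$ into the gaps of a depth-$(n-1)$ index equals minus the sum over the incrementing of its entries. Summed over all compositions, the incrementing operation reproduces the depth-$(n-1)$ sums $\Sigma^{(p)}_{n-1}$, which lowers the depth and sets up an induction on $n$; the base case $n=1$ is immediate since $\zf^{\ast}(k)=(1+(-1)^k)\z(k)$, and the depth-$2$ case follows directly from the definition together with the classical (Euler) sum formula and $\z(a-1,1)\equiv\frac{a-1}{2}\z(a)\pmod{\z(2)}$ for odd $a$. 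The two binomial coefficients $\binom{k-1}{i-1}$ and $\binom{k-1}{n-i}$ should then emerge from counting the position $i$ from the left and, through the reversal identity below, from the right.

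The main obstacle is that this recursion most naturally controls the sum over positions rather than a fixed $\Sigma^{(i)}$: carrying the Hoffman identity through gives $\sum_{i=1}^{n}\Sigma^{(i)}_{n}\equiv\sum_{p=1}^{n-1}\Sigma^{(p)}_{n-1}$, which matches the fact that the alternating binomial sum $\sum_i(-1)^{i-1}\bigl(\binom{k-1}{i-1}+(-1)^n\binom{k-1}{n-i}\bigr)$ vanishes, but does not by itself pin down the individual values. To isolate each $\Sigma^{(i)}$ I would combine the induction with the exact reversal identity $\zf^{\ast}(k_1,\ldots,k_n)=(-1)^{k_1+\cdots+k_n}\zf^{\ast}(k_n,\ldots,k_1)$, which follows immediately by reindexing the defining sum and which yields $\Sigma^{(i)}\equiv(-1)^{k}\Sigma^{(n+1-i)}$; together with a refined depth induction that peels the left-most entry with explicit position tracking, this determines every $\Sigma^{(i)}$. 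Finally, for even $k$ the target multiple of $\z(k)$ is $\equiv0$, and the same bookkeeping shows $\Sigma^{(i)}\equiv0$ because every surviving contribution then contains an even single zeta value as a factor; the genuine content of the identity is the odd-$k$ case, where those factors drop out and leave precisely the claimed multiple of $\z(k)$.
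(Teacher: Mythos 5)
Your opening reduction is valid and is genuinely different from the paper (which never uses Theorem \ref{Symmetric formula} in proving Theorem \ref{Sum formula}): the orbit argument correctly gives $\sum_{k_1+\cdots+k_n=k}\zf^{\ast}(k_1,\ldots,k_n)\equiv 0 \pmod{\z(2)}$, hence $\Sigma^{(i)}\equiv -\sum_{k_i=1}\zf^{\ast}$, and both your stuffle identity built on $\zf^{\ast}(1)=0$ and the reversal identity $\Sigma^{(i)}_n=(-1)^{k}\Sigma^{(n+1-i)}_n$ are correct. But the proof has a genuine gap exactly at the point you flag yourself: the relations you actually establish do not determine the individual $\Sigma^{(i)}_n$. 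Your aggregate identity $\sum_{i}\Sigma^{(i)}_n\equiv\sum_{p}\Sigma^{(p)}_{n-1}$ is automatically compatible with the reversal pairing (as you note, the corresponding alternating binomial sum vanishes), so it adds essentially nothing beyond a consistency check. For instance, for odd $k$ and $n=3$ your system yields only $\Sigma^{(2)}_3=0$ and $\Sigma^{(1)}_3=-\Sigma^{(3)}_3$, leaving $\Sigma^{(1)}_3$ --- which by the theorem equals $\bigl(1-\binom{k-1}{2}\bigr)\z(k)$, a nontrivial quantity --- completely undetermined; for $n\geq 4$ the deficit of equations only grows. The ``refined depth induction that peels the left-most entry with explicit position tracking'' is precisely the missing engine, and it is not a routine refinement: it is the Saito--Wakabayashi recursion that the paper imports as Lemma \ref{recurrence_relation}, namely $(n-i)S_{k,n,i}+iS_{k,n,i+1}+(k-n)S_{k,n-1,i}=0$.

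Worse, even granting that recursion, your induction runs in the wrong direction. Knowing all depth-$(n-1)$ values, the recursion gives only $n-1$ equations in the $n$ unknowns $S_{k,n,1},\ldots,S_{k,n,n}$, with homogeneous solution $S_{k,n,i}=\lambda(-1)^{i-1}\binom{n-1}{i-1}$; a parity check shows that for odd $k$ this solution satisfies the reversal constraint $S_{k,n,i}=-S_{k,n,n+1-i}$ identically whenever $n$ is even (and for even $k$ whenever $n$ is odd). So upward induction on depth, even reinforced by reversal, stalls at every other depth. This is why the paper instead computes the top initial values $S_{k,k-1,i}\equiv(-1)^{i-1}\binom{k}{i}\z(k)$ (Lemma \ref{initial_values_for_FRMZVs}) --- which requires a genuinely new ingredient your toolkit does not contain, the shuffle regularization formula of Ihara--Kaneko--Zagier together with duality --- and then runs the recursion \emph{backward}, where $S_{k,n-1,i}$ is determined outright since $k-n\neq 0$. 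Finally, your even-$k$ claim (``every surviving contribution contains an even single zeta value as a factor'') is an assertion, not an argument: an individual $\zf^{\ast}(k_1,\ldots,k_n)$ is not known to factor into single zeta values modulo $\z(2)$, so this case too must come out of the recursion-plus-initial-values machinery rather than from bookkeeping.
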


\begin{thm}[Height-one duality theorem] \label{height_one}
For positive integers $k$ and $n$, we have the equality
\begin{align*}
\zf (k, \underbrace{1,\ldots ,1}_{n-1}) = \zf (n, \underbrace{1,\ldots ,1}_{k-1})
\end{align*}
in $\ZZ/\zeta (2)$.
\end{thm}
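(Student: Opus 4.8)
The plan is to pass to the shuffle regularization. Since $\zf^{\ast}\equiv\zf^{\sa}\pmod{\z(2)}$ and $\zf$ is by definition $\zf^{\ast}\bmod\z(2)$, it suffices to prove
\[
\zf^{\sa}(k,\mathbf 1_{n-1})\equiv \zf^{\sa}(n,\mathbf 1_{k-1})\pmod{\z(2)},\qquad \mathbf 1_{m}:=\underbrace{1,\dots,1}_{m}.
\]
The decisive feature of the shuffle regularization is that the shuffle regularization of $\z(\mathbf 1_{m})$ is $T^{m}/m!$, so its constant term satisfies $\z^{\sa}(\mathbf 1_{m})=0$ for $m\ge 1$. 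Substituting the index $(k,\mathbf 1_{n-1})$ into the definition, every term with $1\le i\le n-1$ carries the factor $\z^{\sa}(\mathbf 1_{n-i})=0$, so only $i=0$ and $i=n$ survive and I obtain the collapse
\[
\zf^{\sa}(k,\mathbf 1_{n-1})=\z^{\sa}(k,\mathbf 1_{n-1})+(-1)^{k+n-1}\,\z^{\sa}(\mathbf 1_{n-1},k),
\]
where the first value is the convergent MZV $\z(k,\mathbf 1_{n-1})$ when $k\ge 2$ and is $0$ when $k=1$.

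Next I would encode everything in the generating function $\Psi(x,y):=\sum_{k,n\ge 1}\zf^{\sa}(k,\mathbf 1_{n-1})\,x^{k}y^{n}$, so that the desired duality is exactly the symmetry $\Psi(x,y)\equiv\Psi(y,x)\pmod{\z(2)}$, read off from the coefficients of $x^{k}y^{n}$. The first summands contribute $\sum_{k\ge 2,n\ge1}\z(k,\mathbf 1_{n-1})x^{k}y^{n}=x\,\Phi(x,y)$, where
\[
\Phi(x,y)=1-\frac{\Gamma(1-x)\,\Gamma(1-y)}{\Gamma(1-x-y)}
\]
is the symmetric Aomoto--Drinfeld series governing the classical height-one duality $\z(a{+}2,\mathbf 1_{b})=\z(b{+}2,\mathbf 1_{a})$. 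The second summands contribute $-C(-x,-y)$, where $C(x,y):=\sum_{k,n\ge1}\z^{\sa}(\mathbf 1_{n-1},k)x^{k}y^{n}$, so that $\Psi(x,y)=x\,\Phi(x,y)-C(-x,-y)$ and the whole problem reduces to computing the reversed series $C$.

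To find $C$, I would compute the regularized values $\z^{\sa}(\mathbf 1_{n-1},k)$ directly: writing the word $y^{n-1}x^{k-1}y$ and stripping the leading letters $y$ by means of the shuffle-polynomial decomposition $\fho=\fhz[y]$ (as shuffle algebras) expresses $\z^{\sa}(\mathbf 1_{n-1},k)$ as an explicit rational combination of convergent MZVs of weight $k+n-1$; for instance the $k=2$ column already sums to the closed form $\sum_{n\ge1}(-1)^{n-1}n\,\z(n+1)\,y^{n}=y\,\psi'(1+y)$. Summing over all $k,n$ should present $C$ as a $\Gamma$-- and polygamma--type series, after which the sought duality becomes the assertion that $x\Phi(x,y)-C(-x,-y)$ is symmetric once even zeta values are set to zero.

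The main obstacle is precisely this last step. The values $\z^{\sa}(\mathbf 1_{n-1},k)$ are genuinely regularized: the naive dual of the word $y^{n-1}x^{k-1}y$ under ``reverse and swap $x\leftrightarrow y$'' ends in $x$ and leaves the admissible range, so classical duality cannot be applied term by term, and one must control the regularization corrections, showing that all of them, apart from a symmetric remainder, lie in the ideal $(\z(2))$ (concretely, they assemble into even-weight combinations). I expect the final symmetry to reduce to reflection and translation identities for $\Gamma$ and $\psi$ modulo even zetas. As a guide and consistency check I would use the reversal relation
\[
\zf^{\sa}(k_{1},\dots,k_{n})=(-1)^{k_{1}+\cdots+k_{n}}\,\zf^{\sa}(k_{n},\dots,k_{1}),
\]
immediate from the definition, which already links the two collapsed terms via $\zf^{\sa}(k,\mathbf 1_{n-1})=(-1)^{k+n-1}\zf^{\sa}(\mathbf 1_{n-1},k)$; note, however, that this reversal is a different symmetry from the height-one duality $k\leftrightarrow n$ being proved, so it only organizes the computation rather than replacing it.
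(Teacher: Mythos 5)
Your setup is correct and in fact matches the paper's opening moves: the collapse
\[
\zf^{\sa}(k,\mathbf{1}_{n-1})=\z^{\sa}(k,\mathbf{1}_{n-1})+(-1)^{k+n-1}\z^{\sa}(\mathbf{1}_{n-1},k)
\]
coming from $\z^{\sa}(\mathbf{1}_m)=0$, the reduction modulo $\z(2)$ to the shuffle version, and the use of the Aomoto--Drinfel'd series for the convergent part are all exactly what the paper does. But the proof does not close: everything hinges on evaluating the reversed regularized series $C(x,y)=\sum_{k,n}\z^{\sa}(\mathbf{1}_{n-1},k)x^ky^n$, and this is precisely the step you leave open (``the main obstacle is precisely this last step''). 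Saying that the regularization corrections ``should assemble into even-weight combinations'' is a restatement of the theorem for that piece, not an argument; without an actual closed form for $C$, the asserted symmetry of $x\Phi(x,y)-C(-x,-y)$ modulo $\z(2)$ is exactly the statement to be proved.

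The paper fills this hole with two concrete ingredients. First, its key lemma shows the regularized value is a genuine (convergent) zeta-star value,
\[
\z^{\sa}(\underbrace{1,\ldots,1}_{n-1},k)=(-1)^{n-1}\zs(k,\underbrace{1,\ldots,1}_{n-1}),
\]
proved by applying the Ihara--Kaneko--Zagier regularization formula (eq.\ (5.2) of that paper) to expand the left side into convergent MZVs, then using MZV duality and Ohno's sum-formula computation to identify the resulting sum as a height-one zeta-star value; your $k=2$ column $\sum_n(-1)^{n-1}n\,\z(n+1)y^n$ is just the instance $\zs(2,\mathbf{1}_{n-1})=n\z(n+1)$ of this lemma. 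Second, once $C$ is expressed through zeta-star values, the paper does not compute its generating function from scratch but quotes Kaneko--Ohno's duality theorem for multiple zeta-star values, which gives
\[
\sum_{k,n\ge2}\Bigl((-1)^k\zs(k,\mathbf{1}_{n-1})-(-1)^n\zs(n,\mathbf{1}_{k-1})\Bigr)X^{k-1}Y^{n-1}
=-\psi(X)+\psi(Y)-\pi\bigl(\cot(\pi X)-\cot(\pi Y)\bigr)\tfrac{\Gamma(1-X)\Gamma(1-Y)}{\Gamma(1-X-Y)}.
\]
Combining this with the Aomoto--Drinfel'd series and the identities $\pi\cot(\pi X)=\tfrac1X+\psi(1-X)-\psi(1+X)$ and $\psi(X)=\psi(1+X)-\tfrac1X$ collapses the whole difference to
\[
-2\Bigl(1-\tfrac{\Gamma(1-X)\Gamma(1-Y)}{\Gamma(1-X-Y)}\Bigr)\sum_{l\ge1}\z(2l)\bigl(X^{2l-1}-Y^{2l-1}\bigr),
\]
whose coefficients visibly lie in the ideal generated by $\z(2)$. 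So to complete your argument you need exactly these two missing pieces: a proof of the key lemma (or an equivalent closed evaluation of $C$; your proposed direct shuffle computation amounts to reproving it), and then the Kaneko--Ohno generating function together with the $\cot$/$\psi$ bookkeeping, rather than an appeal to the expectation that the corrections are even-weight.
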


\section{Proofs}
\subsection{Proof of Theorem \ref{Symmetric formula}}
Let $\k_1,\k_2$ and $\k$ be any index sets. 
We note that the FRMZVs $\zf^{\ast} (k_1, k_2, \ldots , k_n)$ satisfy the harmonic product rule: 
\[ \zf^{\ast}(\k_1)\zf^{\ast}(\k_2) = \zf^{\ast}(\k_1\ast \k_2),  \]
where the right-hand-side is a linear combination of $\zf^{\ast}(\k)'s$ coming from the harmonic product in \cite{hoffman-97}, e.g., $\zf^{\ast}((2)\ast (2))=2\zf^{\ast}(2,2)+\zf^{\ast}(4)$.  

Hoffman's theorem \cite[Theorem 4.1]{hoffman-2008} states that any symmetric sum 
\[ \sum_{\sigma \in S_n} \zf^{\ast}(k_{\sigma(1)}, k_{\sigma(2)}, \ldots , k_{\sigma(n)}) \]
is a polynomial in the Riemann zeta values $\z(k)$. 
His proof only uses the harmonic product rule of MZVs, and hence applies to our $\zf^{\ast}(\k)'s$. 
Therefore, we conclude completely in the similar manner as in \cite{hoffman-2008} that the symmetric sum above is a sum of products of $\zf^{\ast} (k)=(1+(-1)^k)\z (k)$, which is 0 when $k$ is odd and a multiple of $\z(2)$ when $k$ is even. 

\textbf{Remark.}
One can also prove Theorem \ref{Symmetric formula} directly by using the definition. 
For example, we compute 
\begin{align*}
& \sum_{\sigma \in S_3} \zf^{\ast} (k_{\sigma(1)}, k_{\sigma(2)}, k_{\sigma(3)}) \\ 
& = (1 + (-1)^{k_1})(1 + (-1)^{k_2})(1 + (-1)^{k_3}) \sum_{\sigma \in S_3} \z^{\ast}({k_{\sigma (1)}}, {k_{\sigma (2)}}, {k_{\sigma (3)}}) \\
& + ( (-1)^{k_{1}} + (-1)^{k_{2}}) (1 + (-1)^{k_{3}}) (\z (k_1+k_2, k_3) + \z^{\ast}(k_3, k_1+k_2)) \\
& + ( (-1)^{k_{1}} + (-1)^{k_{3}}) (1 + (-1)^{k_{2}}) (\z (k_1+k_3, k_2) + \z^{\ast}(k_2, k_1+k_3)) \\
& + ( (-1)^{k_{2}} + (-1)^{k_{3}}) (1 + (-1)^{k_{1}}) (\z (k_2+k_3, k_1) + \z^{\ast}(k_1, k_2+k_3)).
\end{align*}

When the weight (= sum of the indices) $k$ is odd, the coefficients $(1 + (-1)^{k_1})(1 + (-1)^{k_2})(1 + (-1)^{k_3})$ and $( (-1)^{k_{1}} + (-1)^{k_{2}}) (1 + (-1)^{k_{3}})$ and etc.\ become 0. 
When $k$ is even, the factor $(1 + (-1)^{k_1})(1 + (-1)^{k_2})(1 + (-1)^{k_3})$ becomes $0$ if at least one of $k_i$ is odd. 
When all $k_i$'s are even, then $\sum_{\sigma \in S_3} \zeta^{\ast}({k_{\sigma (1)}}, {k_{\sigma (2)}}, {k_{\sigma (3)}})=\zeta^{\ast} (k_1)\zeta^{\ast} (k_2)\zeta^{\ast} (k_3)-\zeta (k_1+k_2)\zeta^{\ast} (k_3)-\zeta (k_1+k_3)\zeta^{\ast} (k_2)-\zeta (k_2+k_3)\zeta^{\ast} (k_1)+2\zeta (k_1+k_2+k_3)$ is 0 modulo $\zeta (2)$. 
As for the term $( (-1)^{k_{1}} + (-1)^{k_{2}}) (1 + (-1)^{k_{3}})(\z (k_1+k_2, k_3) + \z^{\ast}(k_3, k_1+k_2))$ etc., 
if we write this as 
$((-1)^{k_{1}}+(-1)^{k_{2}})(1+(-1)^{k_{3}})(\z(k_1+k_2)\z^{\ast}(k_3)-\z(k_1+k_2+k_3))$, 
we see either $((-1)^{k_{1}}+(-1)^{k_{2}})(1+(-1)^{k_{3}})=0$ or $(\z(k_1+k_2)\z^{\ast}(k_3)-\z(k_1+k_2+k_3))$ is a multiple of $\z(2)$. 

\subsection{Proof of Theorem \ref{Sum formula}}
We can prove Theorem \ref{Sum formula} exactly in the same manner as in \cite{saito_wakabayashi-2014}. 
Set
\begin{align*}
S_{k, n, i} := \sum_{ \substack{ k_1 + k_2 + \cdots + k_n = k \\ k_i \geq 2}} \zf^{\ast} (k_1, k_2, \ldots , k_n). 
\end{align*} 
 
We notice that the harmonic version of FRMZVs satisfy the harmonic product rule. 
Thus, $S_{k, n, i}$ enjoy the recursion relation in the following lemma, which can be proved exactly in the same way as in \cite[Proposition 2.2]{saito_wakabayashi-2014}.  
\begin{lem} \label{recurrence_relation}
For positive integers $k$, $n$ and $i$ with $2 \leq i+1 \leq n \leq k-1$, we have
\begin{align*}
(n-i)S_{k, n, i} + i S_{k, n, i+1} + (k-n)S_{k, n-1, i} = 0. 
\end{align*}
\end{lem}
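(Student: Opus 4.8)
The plan is to derive the recursion from the harmonic product rule, using only that every depth-one value is trivial modulo $\z(2)$. As already noted, $\zf^{\ast}(s)=(1+(-1)^s)\z(s)$, which is $0$ for odd $s$ and a rational multiple of a power of $\z(2)$ for even $s$; in either case $\zf^{\ast}(s)\equiv 0\pmod{\z(2)}$ for every $s\ge 1$. Multiplying by an arbitrary index $\mathbf v$ and invoking the harmonic product rule gives the family of congruences $\zf^{\ast}\big((s)\ast \mathbf v\big)=\zf^{\ast}(s)\,\zf^{\ast}(\mathbf v)\equiv 0\pmod{\z(2)}$. I would sum these over all $s\ge 1$ and over all indices $\mathbf v=(v_1,\dots,v_{n-1})$ of depth $n-1$ and weight $k-s$ subject to the single constraint $v_i\ge 2$, and then identify the resulting groups of terms.

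Next I would split each harmonic product $(s)\ast \mathbf v$ into its insertion terms, where $s$ is placed into one of the $n$ gaps of $\mathbf v$ to give a depth-$n$ index, and its merging terms, where $s$ is added to a part $v_j$ to give a depth-$(n-1)$ index, and count how often each target index is produced. For a depth-$n$ index $\mathbf b$, an insertion at position $p$ forces $s=b_p$ and $\mathbf v=\mathbf b$ with its $p$-th part removed, so the constraint $v_i\ge 2$ becomes $b_{i+1}\ge 2$ when $p\le i$ and $b_i\ge 2$ when $p\ge i+1$. Since there are $i$ gaps with $p\le i$ and $n-i$ gaps with $p\ge i+1$, a given $\mathbf b$ is counted $i$ times when $b_{i+1}\ge 2$ and $n-i$ times when $b_i\ge 2$; thus the insertion terms assemble into $(n-i)S_{k,n,i}+i\,S_{k,n,i+1}$. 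For a depth-$(n-1)$ index $\mathbf c$, a merge at part $j$ forces $v_j=c_j-s$, and the requirements $v_j\ge 1$ and $v_i\ge 2$ restrict the admissible $s$.

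The crux, and the step I expect to need the most care, is that the merge count is uniform: when $c_i\ge 2$ the number of admissible pairs is $(c_i-2)+\sum_{j\ne i}(c_j-1)=\big(\sum_j(c_j-1)\big)-1=k-n$ by the weight identity $\sum_j(c_j-1)=k-(n-1)$, independent of $\mathbf c$, while no term survives when $c_i=1$. Hence the merging terms assemble into exactly $(k-n)S_{k,n-1,i}$, and adding the two contributions yields $(n-i)S_{k,n,i}+i\,S_{k,n,i+1}+(k-n)S_{k,n-1,i}\equiv 0\pmod{\z(2)}$. This is precisely the mechanism of \cite[Proposition 2.2]{saito_wakabayashi-2014}, the sole adaptation being that the exact vanishing of depth-one finite multiple zeta values used there is replaced here by the congruence $\zf^{\ast}(s)\equiv 0\pmod{\z(2)}$; the index-shift bookkeeping for the insertions and the collapse of the merge count to the constant $k-n$ are the delicate points.
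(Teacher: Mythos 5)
Your proposal is correct, and it is essentially the paper's own proof: the paper gives no details for this lemma, deferring entirely to \cite[Proposition 2.2]{saito_wakabayashi-2014}, and your insertion/merge counting (the shift of the constraint between $b_i\ge 2$ and $b_{i+1}\ge 2$, and the collapse of the merge count to the constant $k-n$ via the weight identity) is exactly that argument, correctly adapted by replacing the exact vanishing of depth-one values with the congruence $\zf^{\ast}(s)\equiv 0\pmod{\z(2)}$. One discrepancy should be flagged, and it is in your favor: your argument proves $(n-i)S_{k,n,i}+iS_{k,n,i+1}+(k-n)S_{k,n-1,i}\equiv 0\pmod{\z(2)}$, whereas the lemma as printed asserts the exact equality $=0$, which is in fact false. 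Indeed, your counting steps are exact identities (only the final vanishing is a congruence), so the left-hand side of the lemma equals $\sum\zf^{\ast}(s)\,\zf^{\ast}(\mathbf{v})$ on the nose; for $k=4$, $n=2$, $i=1$ this is $\zf^{\ast}(1)\zf^{\ast}(3)+\zf^{\ast}(2)\zf^{\ast}(2)=(2\z(2))^2=10\,\z(4)\neq 0$ (equivalently, one checks directly $S_{4,2,1}=S_{4,2,2}=3\z(4)$ and $S_{4,1,1}=2\z(4)$). The lemma must therefore be read modulo $\z(2)$ — which is all that its role in the proof of Theorem \ref{Sum formula} requires, since the sum formula is itself a congruence — and your proof establishes precisely that corrected statement.
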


We prove Theorem \ref{Sum formula} by backward induction on $n$. 
To this, we need the initial value. 
\begin{lem} \label{initial_values_for_FRMZVs}
For positive integers $k$ and $i$ with $1 \leq i \leq k-1$, we have
\begin{align*}
S_{k, k-1, i} \equiv (-1)^{i-1}\binom{k}{i}\z (k)  \pmod{\zeta (2)}.
\end{align*}
\end{lem}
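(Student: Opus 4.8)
The plan is to notice that the summation constraint collapses $S_{k,k-1,i}$ to a single term, then to pass to the shuffle regularization (where all-ones values vanish) and exploit a clean shuffle recursion for the surviving pieces.

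\emph{Step 1 (reduction to one term).} Among the compositions $(k_1,\ldots,k_{k-1})$ of $k$ into $k-1$ positive parts, the total excess over $(1,\ldots,1)$ is $1$, so exactly one part equals $2$ and the others equal $1$; the condition $k_i\ge 2$ forces that part into position $i$. Hence $S_{k,k-1,i}=\zf^{\ast}(\underbrace{1,\ldots,1}_{i-1},2,\underbrace{1,\ldots,1}_{k-1-i})$, and it remains to evaluate this one FRMZV modulo $\z(2)$.

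\emph{Step 2 (pass to shuffle and collapse).} Since the assertion is a congruence modulo $\z(2)$, I would use the Kaneko--Zagier congruence $\zf^{\ast}\equiv\zf^{\sa}$ and work with the shuffle version, whose advantage is that $\z^{\sa}(\underbrace{1,\ldots,1}_m)=0$ for every $m\ge 1$ (and $=1$ for $m=0$). Writing $\mathbf k=(\underbrace{1,\ldots,1}_{i-1},2,\underbrace{1,\ldots,1}_{k-1-i})$, in each summand of the definition of $\zf^{\sa}(\mathbf k)$ the single entry $2$ lies in exactly one of the two factors, the other being an all-ones string (reversal does not affect an all-ones string). Thus every cut with both factors nonempty dies, and only the two extreme cuts survive, giving
\[ \zf^{\sa}(\mathbf k)=\z^{\sa}(\underbrace{1,\ldots,1}_{i-1},2,\underbrace{1,\ldots,1}_{k-1-i})+(-1)^{k}\,\z^{\sa}(\underbrace{1,\ldots,1}_{k-1-i},2,\underbrace{1,\ldots,1}_{i-1}). \]

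\emph{Step 3 (closed form via a shuffle recursion).} Set $b_{p,q}:=\z^{\sa}(\underbrace{1,\ldots,1}_p,2,\underbrace{1,\ldots,1}_q)$. Computing the shuffle product of $\z^{\sa}(1)$ with $b_{p-1,q}$ in the iterated-integral realization and taking constant terms (the shuffle-regularized $\zeta(1)$ equals $T$, hence contributes no constant term), the count of single-letter insertions yields the \emph{exact} relation $p\,b_{p,q}+(q+2)\,b_{p-1,q+1}=0$. With the classical base value $b_{0,q}=\z(2,\underbrace{1,\ldots,1}_q)=\z(q+2)$ (a case of the duality theorem), iteration gives $b_{p,q}=(-1)^{p}\binom{p+q+1}{p}\z(p+q+2)$. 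Substituting into the display above, $(-1)^{k}b_{k-1-i,i-1}=(-1)^{i-1}\binom{k-1}{i}\z(k)$ and $b_{i-1,k-1-i}=(-1)^{i-1}\binom{k-1}{i-1}\z(k)$, and Pascal's rule collapses the two binomials to $(-1)^{i-1}\binom{k}{i}\z(k)$. In fact one obtains $\zf^{\sa}(\mathbf k)=(-1)^{i-1}\binom{k}{i}\z(k)$ on the nose; reducing modulo $\z(2)$ finishes the proof.

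The main obstacle is Step 3, namely pinning down $b_{p,q}$; the shuffle recursion makes this clean, but one must fix the regularization conventions carefully. The only other delicate point is sign and index bookkeeping: the definition of $\zf^{\sa}$ reverses the prefix index, so the second surviving term is $b_{k-1-i,i-1}$ rather than $b_{i-1,k-1-i}$, and mixing these up would interchange $\binom{k-1}{i-1}$ and $\binom{k-1}{i}$. Steps 1 and 2 are then routine.
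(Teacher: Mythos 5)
Your proof is correct, and its skeleton matches the paper's own argument: your Step 1 (the sum collapses to the single index $(\underbrace{1,\ldots,1}_{i-1},2,\underbrace{1,\ldots,1}_{k-i-1})$) and Step 2 (passing to $\zf^{\sa}$ via the Kaneko--Zagier congruence and killing every interior cut because the shuffle-regularized all-ones values vanish) coincide line by line with the published proof, including the sign $(-1)^k$ and the reversal in the second surviving term. The genuine difference is in Step 3. The paper evaluates the two surviving terms by quoting the regularization formula of Ihara--Kaneko--Zagier (their eq.~(5.2), specialized to $w_0=xy^{l}$), which gives $\z^{\sa}(\underbrace{1,\ldots,1}_{m},2,\underbrace{1,\ldots,1}_{l-1})=(-1)^{m}\binom{m+l}{m}\z(2,\underbrace{1,\ldots,1}_{m+l-1})$, and only then applies duality $\z(2,\underbrace{1,\ldots,1}_{k-2})=\z(k)$. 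You instead reprove exactly the special case you need: the insertion count $y\,\sa\,(y^{p-1}xy^{q+1})=p\,y^{p}xy^{q+1}+(q+2)\,y^{p-1}xy^{q+2}$, combined with the homomorphism property $Z^{\sa}(y\,\sa\,w;T)=T\cdot Z^{\sa}(w;T)$ (whose constant term vanishes), yields the exact recursion $p\,b_{p,q}+(q+2)\,b_{p-1,q+1}=0$, with duality entering only at the base $b_{0,q}=\z(q+2)$; iteration gives $b_{p,q}=(-1)^{p}\binom{p+q+1}{p}\z(p+q+2)$, which is the same closed form. What your route buys is self-containedness: it needs nothing beyond the fact that shuffle regularization is an algebra homomorphism sending $y\mapsto T$, plus duality, at the cost of a short induction; the paper's version is shorter given the reference, and implicitly invokes the same machinery through the citation. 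Both arguments produce the exact identity $\zf^{\sa}(\mathbf{k})=(-1)^{i-1}\binom{k}{i}\z(k)$, finish with Pascal's rule, and use the congruence only through Kaneko--Zagier. Your caution about the order of the arguments in $b_{k-1-i,i-1}$ versus $b_{i-1,k-1-i}$ is also apt: swapping them would flip the overall sign to $(-1)^{k-i-1}$, which is wrong when $k$ is odd.
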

\begin{proof}
Since $S_{k, k-1, i} = \zf^{\ast}(\underbrace{1, \ldots ,1}_{i-1}, 2, \underbrace{1, \ldots ,1}_{k-i-1})$, we compute $\zf^{\sa} (\underbrace{1, \ldots ,1}_{i-1}, 2, \underbrace{1, \ldots ,1}_{k-i-1})$ instead. 
Because of the fact $\zf^{\sa} (1, \ldots ,1)=0$, we have by definition
\begin{align*} 
S_{k, k-1, i} &\equiv \zf^{\sa} (\underbrace{1, \ldots ,1}_{i-1}, 2, \underbrace{1, \ldots ,1}_{k-i-1}) \pmod{\zeta (2)} \\
& = \z^{\sa} (\underbrace{1, \ldots ,1}_{i-1}, 2, \underbrace{1, \ldots ,1}_{k-i-1}) + (-1)^k \z^{\sa} (\underbrace{1, \ldots ,1}_{k-i-1}, 2, \underbrace{1, \ldots ,1}_{i-1}).  
\end{align*}  
By using \cite[eq.(5.2)]{ihara_kaneko_zagier-2006} for $w_0=xy^{l}$, we have $\z^{\sa} (\underbrace{1, \ldots ,1}_{m}, 2, \underbrace{1, \ldots ,1}_{l-1})=(-1)^{m}\binom{m+l}{m}\z (2, \underbrace{1, \ldots ,1}_{m+l-1})$. Thus,  
\begin{align*}
S_{k, k-1, i} \equiv (-1)^{i-1} \biggl( \binom{k-1}{i-1}+\binom{k-1}{i} \biggr)\z (2, \underbrace{1, \ldots ,1}_{k-2}) 
 = (-1)^{i-1}\binom{k}{i}\z (k) \pmod{\zeta (2)}.
\end{align*}  
\end{proof} 

Let us consider the case $n=k-1$ of Theorem \ref{Sum formula}. 
If $k$ is even, the identity holds from Lemma \ref{initial_values_for_FRMZVs}. 
If $k$ is odd, then $n$ is even and the identity again follows because
\begin{align*} 
\label{eq7_1}
\binom{k-1}{i-1}+(-1)^n\binom{k-1}{n-i}=\binom{k-1}{i-1}+\binom{k-1}{i}=\binom{k}{i}. 
\end{align*}
We assume the identity holds for $n$. 
By Lemma \ref{recurrence_relation},  
\begin{align*}
(n-k)S_{k, n-1, i} &= (n-i)S_{k, n, i} + i S_{k, n, i+1} \\
& = (n-i)(-1)^{i-1} \biggl( \binom{k-1}{i-1} + (-1)^n \binom{k-1}{n-i} \biggr) \z(k) \\
&\quad + i(-1)^i \biggl( \binom{k-1}{i} + (-1)^n \binom{k-1}{n-i-1} \biggr) \z(k) \\
& = (-1)^{i-1} \biggl( (n-i) \binom{k-1}{i-1} + (k-n+i) (-1)^{n} \binom{k-1}{n-i-1} \biggr) \z(k) \\ 
&\quad + (-1)^i \biggl( (k-i) \binom{k-1}{i-1} + i (-1)^{n} \binom{k-1}{n-i-1} \biggr) \z(k) \\
& = (n-k) (-1)^{i-1} \biggl( \binom{k-1}{i-1} + (-1)^{n-1} \binom{k-1}{n-i-1} \biggr) \z(k). 
\end{align*}
Thus, the identity holds for $n-1$. 

\textbf{Remark.}
We mention an analogy of Theorem \ref{Sum formula} on finite real multiple zeta star values.  
For positive integers $k_1, k_2, \ldots , k_{n}$, let us define $\zfsa$ by  
\[ \zfsa(k_1, k_2, \ldots , k_n) := \sum_{ \substack{\circ \textrm{ is either a comma "," } \\ \textrm{ or a plus "+"}} } \zf^{\ast}(k_1 \circ k_2 \circ \cdots \circ k_n). \]   
Set ${S}^{\star}_{k, n, i} := \sum_{ \substack{ k_1 + k_2 + \cdots + k_n = k \\ k_i \geq 2}} \zfsa (k_1, k_2, \ldots , k_n)$. 
Since these $\zfsa(k_1, k_2, \ldots , k_n)$ satisfy the same harmonic product rule as $\zs (k_1, k_2, \ldots , k_n)$, $S^{\star}_{k, n, i}$ enjoy the same recursion relation as \cite[Proposition 2.2]{saito_wakabayashi-2014}, that is, $ (n-i)S^{\star}_{k, n, i} + i S^{\star}_{k, n, i+1} - (k-n)S^{\star}_{k, n-1, i} = 0$. 
Writing $\k_i \sqcup \k_j$ for juxtaposition of index sets $\k_i$ and $\k_j$, we see from \cite[Theorem 3.1]{hoffman-2008} that 
\[ \zfsa (k_n, k_{n-1}, \ldots , k_1) = (-1)^n\sum_{\k_1 \sqcup \cdots \sqcup \k_l = (k_1, k_2, \ldots , k_n)} (-1)^l \zf^{\ast}(\k_1)\cdots\zf^{\ast}(\k_l). \] 
Consider the case $(k_1, k_2, \ldots , k_n)=(\underbrace{1, \ldots ,1}_{k-i-1}, 2, \underbrace{1, \ldots ,1}_{i-1})$ in this equality. 
Since $\zf^{\ast}(1, \ldots ,1) \equiv \zf^{\sa} (1, \ldots ,1)=0 \pmod{\zeta (2)}$, the right-hand-side is equal modulo $\z(2)$ to $\zf^{\sa}(\underbrace{1, \ldots ,1}_{i-1}, 2, \underbrace{1, \ldots ,1}_{k-i-1})$.  
Thus, we find $S^{\star}_{k, k-1, i} \equiv S_{k, k-1, i} \equiv (-1)^{i-1}\binom{k}{i}\z (k) \pmod{\zeta (2)}$. 
In a similar way as the proof of Theorem \ref{Sum formula} (i.e., by backward induction on $n$), we get
\begin{align*}
\sum_{ \substack{ k_1 + k_2 + \cdots + k_n = k \\ k_i \geq 2}} \zfsa (k_1, k_2, \ldots , k_n) \equiv (-1)^{i-1} \biggl( (-1)^n\binom{k-1}{i-1}+\binom{k-1}{n-i} \biggr) \z (k) \pmod{\zeta (2)}.  
\end{align*}

\subsection{Proof of Theorem \ref{height_one}}
For a given index $\k$, we call the number of its elements greater than $1$ the height. 
With this terminology, we shall call 
\[ \zf (k, \underbrace{1, \cdots ,1}_{n-1}) \]
 height one FRMZVs. 
In this section, we prove Theorem \ref{height_one}. 
To this, we state the following key lemma.
\begin{lem} \label{key_lemma}
For positive integers $k$ and $n$ with $k \geq 2$, we have 
\begin{align*}
\z^{\sa} (\underbrace{1, \ldots ,1}_{n-1}, k) = (-1)^{n-1} \zs (k, \underbrace{1,\ldots ,1}_{n-1}).
\end{align*}
\end{lem}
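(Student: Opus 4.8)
The plan is to evaluate the two sides independently as $\Z$-linear combinations of convergent MZVs and then identify them, in the spirit of the proof of Lemma~\ref{initial_values_for_FRMZVs}.

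First I would expand the right-hand side. Since $k\geq 2$, the star value $\zs(k,\{1\}^{n-1})$ converges, and expanding each inequality $m_i\geq m_{i+1}$ into the two cases $m_i>m_{i+1}$ and $m_i=m_{i+1}$ writes it as a sum $\sum\z(a_1,\ldots,a_r)$ over all compositions with $a_1\geq k$, $a_i\geq 1$ and $a_1+\cdots+a_r=k+n-1$; each such $\z$ converges because $a_1\geq 2$. Thus the right-hand side is $(-1)^{n-1}$ times an explicit combination of convergent MZVs.

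Next I would compute the left-hand side. The index $(\{1\}^{n-1},k)$ corresponds to the non-admissible word $y^{n-1}x^{k-1}y$, so its shuffle-regularized value must be extracted, exactly as in Lemma~\ref{initial_values_for_FRMZVs}, via the explicit regularization formula \cite[eq.(5.2)]{ihara_kaneko_zagier-2006}, now applied to $w_0=x^{k-1}y$ (the single-block word for the index $(k)$). Concretely, since $\regs$ is a shuffle homomorphism sending $y$ to the regularization variable, peeling the leading $y$'s one at a time turns $\regs(yW)$ into $-\regs(DW)$, where $D$ inserts one letter $y$ into every non-initial gap of $W$; iterating this $n-1$ times contributes the sign $(-1)^{n-1}$ and produces the sum over all insertions of $n-1$ copies of $y$ into the gaps of $x^{k-1}y$. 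Once the leading $y$'s are gone every word is admissible, giving $\z^{\sa}(\{1\}^{n-1},k)=(-1)^{n-1}$ times an explicit sum of convergent MZVs.

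The main obstacle is the final matching step: the insertion sum from the third paragraph is not equal term by term to the bar-expansion from the second. Already for $n=2$ the left-hand computation yields $\z(k,1)+\sum_{a=2}^{k}\z(a,k+1-a)$ while the star value is $\z(k,1)+\z(k+1)$, so the identity is forced precisely by the classical sum formula $\sum_{a=2}^{k}\z(a,k+1-a)=\z(k+1)$, and for larger $n$ by its higher-depth analogues. I would resolve this either by reading off closed binomial coefficients from \cite[eq.(5.2)]{ihara_kaneko_zagier-2006} and invoking the sum formula directly, or by induction on $n$: the peeling computation yields the recursion $(n-1)\,\z^{\sa}(\{1\}^{n-1},k)=-\sum_{g=1}^{k-1}\z^{\sa}(\{1\}^{n-2},g+1,k-g)-\z^{\sa}(\{1\}^{n-2},k,1)$, to be matched against the analogous relation for the star values, with base case $n=1$ where both sides equal $\z(k)$. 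Since this recursion leaves the height-one family, carrying out the induction cleanly really requires the general regularization formula for $\z^{\sa}(\{1\}^{m},\k)$ with arbitrary admissible $\k$, and assembling that uniformly is where I expect the real work to lie.
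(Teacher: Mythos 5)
Your reduction is correct as far as it goes, and it coincides with the first half of the paper's own proof: the peeling identity $\z^{\sa}(yW)=-\z^{\sa}(DW)$ (valid at the level of constant terms, which is what you use) is a restatement of \cite[eq.(5.2)]{ihara_kaneko_zagier-2006}, and it does express $\z^{\sa}(\underbrace{1,\ldots,1}_{n-1},k)$ as $(-1)^{n-1}$ times the sum of $\z$ over all insertions of $n-1$ ones into $x^{k-1}y$; your depth-two verification is also right. The genuine gap is exactly the step you flag as ``the main obstacle'': identifying that insertion sum with $\zs(k,\underbrace{1,\ldots,1}_{n-1})$ is the entire content of the lemma, and your proposed route (a) --- invoking the sum formula --- does not suffice once $n\geq 3$. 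Writing out the insertion sum, each admissible index $(c_1,\ldots,c_n)$ of weight $n+k-1$ and depth $n$ occurs with multiplicity one plus its number of trailing $1$'s, so the identity actually required is the weighted relation
\begin{align*}
\sum_{\substack{c_1+\cdots+c_n=n+k-1\\ c_1\geq 2}}\bigl(1+\#\{\text{trailing }1\text{'s of }(c_1,\ldots,c_n)\}\bigr)\,\z(c_1,\ldots,c_n)=\zs(k,\underbrace{1,\ldots,1}_{n-1}).
\end{align*}
For $n=2$ this collapses to the classical sum formula, which is why your check works; but already for $k=n=3$ it reads $3\z(3,1,1)+2\z(2,2,1)+\z(2,1,2)=\zs(3,1,1)$, and after subtracting the depth-three sum formula what remains is $\z(3,1,1)+\z(2,2,1)=\z(4,1)+\z(3,2)$ --- an instance of the duality formula, not of any sum formula. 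This is precisely where the paper does its real work: it applies the duality formula of MZVs to each term of the insertion sum and then invokes Ohno's counting argument \cite[Proof of Theorem 2]{ohno-99} (see also \cite[\S 3]{kaneko-2010_2}) to identify the dualized, weighted sum with the star value.

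Your fallback route (b) is also not a proof as it stands. The recursion $(n-1)\,\z^{\sa}(\underbrace{1,\ldots,1}_{n-1},k)=-\sum_{g=1}^{k-1}\z^{\sa}(\underbrace{1,\ldots,1}_{n-2},g+1,k-g)-\z^{\sa}(\underbrace{1,\ldots,1}_{n-2},k,1)$ is correctly derived, but, as you yourself note, it exits the height-one family, so closing the induction would require a regularization formula for $\z^{\sa}(\underbrace{1,\ldots,1}_{m},\k)$ with arbitrary admissible $\k$ together with a matching family of relations on the star side, none of which is supplied. In short, the proposal reduces the lemma to an unproved family of MZV identities (essentially Ohno-type relations) rather than proving it; the missing ingredient is the duality-plus-Ohno step that the paper uses to finish.
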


\begin{proof}
We note that the MZVs $\z^{\sa} (k_1, k_2, \ldots , k_n)$ satisfy the shuffle product rule (for precise definition in \cite{hoffman-97}) coming from the iterated integral expressions of the MZVs: $\z^{\sa}(\k_1)\z^{\sa}(\k_2) = \z^{\sa}(\k_1 \sa \k_2)$, e.g., $\z^{\sa}((1, 1) \sa (2))=3\z^{\sa}(2, 1, 1)+2\z^{\sa}(1, 2, 1)+\z^{\sa}(1, 1, 2)$. 
Here, the notation $\k_1 \sa \k_2$ is a $\Z$-linear combination of indices and we extend $\z^{\sa}$ linearly. 
To make notations easier, let $\z^{\sa} (1\oplus (k_1, k_2, \ldots , k_n))=\z^{\sa}(k_1+1, k_2, \ldots , k_n)$ and $\z^{\sa}(\ldots , l, \underbrace{1,\ldots ,1}_{-1}, m, \ldots)=\z^{\sa}(\ldots , l+m-1, \ldots)$. 
By the regularization formula \cite[eq.(5.2)]{ihara_kaneko_zagier-2006}, we have (extending $1\oplus (\cdot)$ also linearly)
\begin{align*}
\z^{\sa} (\underbrace{1, \ldots ,1}_{n-1}, k)  
& = (-1)^{n-1} \z^{\sa}(1\oplus ((\underbrace{1, \ldots ,1}_{n-1}) \sa (k-1))) \\
& = (-1)^{n-1} \sum_{\substack{{ a_1+\cdots +a_{k}=n-1} \\ a_i \geq 0 \, (i=1, 2, \ldots, k)}} \z(2, \underbrace{1, \ldots ,1}_{a_1-1}, \ldots , 2, \underbrace{1, \ldots ,1}_{a_{k-2}-1}, 2, \underbrace{1, \ldots ,1}_{a_{k-1}+a_k}) \\
& = (-1)^{n-1} \sum_{\substack{{ a_1+\cdots +a_{k-1}=n-1} \\ a_i \geq 0 \, (i=1, 2, \ldots, k-1)}} (a_{k-1}+1) \z(2, \underbrace{1, \ldots ,1}_{a_1-1}, \ldots , 2, \underbrace{1, \ldots ,1}_{a_{k-2}-1}, 2, \underbrace{1, \ldots ,1}_{a_{k-1}}) \\
& = (-1)^{n-1} \sum_{\substack{{ a_1+\cdots +a_{k-1}=n-1} \\ a_i \geq 0 \, (i=1, 2, \ldots, k-1)}} (a_{k-1}+1) \z(a_{k-1}+2, a_{k-2}+1, \ldots , a_1+1). 
\end{align*} 
For the last equality, we used the duality formula of MZVs. 
That the last sum equals $\zs (n, \underbrace{1,\ldots ,1}_{k-1})$ is due to Ohno \cite[Proof of Theorem $2$]{ohno-99}, see also \cite[$\S 3$]{kaneko-2010_2}. 
Thus 
\[ \z ^{\sa} (\underbrace{1, \ldots ,1}_{n-1}, k)=(-1)^{n-1} \zs (k, \underbrace{1,\ldots ,1}_{n-1}).  \]
\end{proof}

Now, we prove Theorem \ref{height_one}.
When either $k$ or $n=1$, the theorem clearly holds. 
We consider the case when $k, n \geq 2$. 
From the above Lemma \ref{key_lemma}, we have
\begin{align*}
\begin{split}
& \zf^{\sa} (k,\underbrace{1,\ldots ,1}_{n-1}) - \zf^{\sa} (n,\underbrace{1,\ldots ,1}_{k-1}) \\
& = \z (k,\underbrace{1,\ldots ,1}_{n-1}) + (-1)^k \zs (k, \underbrace{1,\ldots ,1}_{n-1})-( \z (n, \underbrace{1,\ldots ,1}_{k-1}) + (-1)^n \zs (n, \underbrace{1,\ldots ,1}_{k-1}) ). 
\end{split}
\end{align*}
Let $\psi (X) = \frac{\Gamma '(X)}{\Gamma (X)}$. 
By using the well-known generating series 
\begin{align*}
1 - \sum_{k, n \geq 1} \z (k+1, \underbrace{1, \ldots ,1}_{n-1})X^{k}Y^{n} & = \exp \biggr( \sum_{n \geq 2} \z (n)\frac{X^{n} + Y^{n} - (X +Y)^{n}}{n}\biggl) \\ 
& = \frac{\Gamma(1-X)\Gamma(1-Y)}{\Gamma(1-X-Y)}   
\end{align*} 
(cf. Aomoto \cite{aomoto-1990} and Drinfel'd \cite{drinfel'd-1991}) and $\psi (1-X)=-\sum_{k\geq 2} \z (k)X^{k-1} -\gamma$ ($\gamma$ is Euler's constant.), we have 
\begin{align*}
\begin{split}
& \sum_{k, n \geq 2} \biggr( \z (k, \underbrace{1,\ldots ,1}_{n-1})  - \z (n, \underbrace{1,\ldots ,1}_{k-1}) \biggl) X^{k-1} Y^{n-1}  \\
& = \biggl(\frac{1}{Y}-\frac{1}{X}\biggr) \biggr( 1 - \frac{\Gamma(1-X) \Gamma(1-Y)}{\Gamma(1-X-Y)} \biggl) + \psi (1-X)-\psi (1-Y).  
\end{split}
\end{align*}
On the other hand, from Kaneko and Ohno \cite[Theorem 2]{kaneko_ohno-2009}, 
\begin{align*}
& \sum_{k, n \geq 2} \biggl( (-1)^k \zs (k, \underbrace{1,\ldots ,1}_{n-1}) - (-1)^n \zs (n, \underbrace{1,\ldots ,1}_{k-1}) \biggr) X^{k-1} Y^{n-1}  \\
& = -\psi(X)+\psi(Y)-\pi (\cot(\pi X) - \cot(\pi Y)) \frac{\Gamma(1-X) \Gamma(1-Y)}{\Gamma(1-X-Y)}, 
\end{align*}
From these, and by the well-known equalities: 
\begin{align*}
\pi \cot(\pi X)&=\frac{1}{X}+\psi(1-X)-\psi(1+X), \\
\psi(X) &=\psi(1+X)-\frac{1}{X},  
\end{align*}
we have
\begin{align*}
& \sum_{k, n \geq 2} \biggl( \zf^{\sa}(k, \underbrace{1,\ldots ,1}_{n-1})-\zf^{\sa}(n, \underbrace{1,\ldots ,1}_{k-1}) \biggr) X^{k-1} Y^{n-1} \\ 
& = \biggl(\frac{1}{Y} - \frac{1}{X}\biggr) \biggr( 1-\frac{\Gamma(1-X) \Gamma(1-Y)}{\Gamma(1-X-Y)} \biggl) +\psi (1-X)-\psi (1-Y) \\
& \quad -\psi(X) +\psi(Y) -\pi (\cot(\pi X) - \cot(\pi Y)) \frac{\Gamma(1-X) \Gamma(1-Y)}{\Gamma(1-X-Y)} \\
& = \biggl( 1-\frac{\Gamma(1-X) \Gamma(1-Y)}{\Gamma(1-X-Y)} \biggr) (\psi (1-X)-\psi (1+X)-\psi (1-Y)+\psi (1+Y)) \\
& = -2\biggl( 1-\frac{\Gamma(1-X) \Gamma(1-Y)}{\Gamma(1-X-Y)} \biggr) \sum_{l \geq 1} \z (2l) (X^{2l-1}-Y^{2l-1}).   
\end{align*}
Since the coefficients of $\frac{\Gamma(1-X) \Gamma(1-Y)}{\Gamma(1-X-Y)}$ belong to the $\Q$-algebra $\ZZ$, we have 
\[ \zf^{\sa} (k,\underbrace{1,\ldots ,1}_{n-1}) \equiv \zf^{\sa} (n,\underbrace{1,\ldots ,1}_{k-1})  \pmod{\zeta (2)}. \] 
This proves Theorem \ref{height_one}.

\section*{Acknowledgment}
The author would like to thank Professor Masanobu Kaneko for valuable comments and suggestions.

\end{document}